\documentclass[a4paper,12pt]{article}

\usepackage[usenames,dvipsnames]{xcolor}
\usepackage{amssymb}
\usepackage{amsfonts}
\usepackage{tabulary}
\usepackage{amsmath}
\usepackage{textcomp}
\usepackage{setspace}
\usepackage{graphicx}
\usepackage{authblk}
\newtheorem{theorem}{Theorem}
\newtheorem{definition}[theorem]{Definition}
\newtheorem{lemma}[theorem]{Lemma}
\newtheorem{example}[theorem]{Example}
\newtheorem{proposition}[theorem]{Proposition}

\newtheorem{remark}[theorem]{Remark}

\newenvironment{proof}[1][Proof]{\noindent\textbf{#1. }}{\ $\square$\bigskip}

\begin{document}

\title{Killing Vectors and Bochner-Type Theorems in Pseudo-Riemannian Hom-Lie Algebras}

\author{Raymond HOUNNONKPE \and Ibrahima BAKAYOKO \and Emmanuel GNANDI \and Jo\"el TOSSA }
\maketitle

\vskip 0.2truecm

\begin{abstract}
In this paper, we study pseudo-Riemannian Hom-Lie algebras. We establish Hom-versions of the classical Bochner theorem in both the Riemannian and Lorentzian settings. Finally, we prove that the space of Killing vectors on a pseudo-Riemannian Hom-Lie algebra forms a totally geodesic Hom-Lie subalgebra.
\end{abstract}

\begingroup
\renewcommand\thefootnote{}
\footnotetext{
\textbf{2020 Mathematics Subject Classification.} Primary 53A15, 53C15, 53B30; Secondary 53C55, 53D05.

\textbf{Keywords.} Hom-Lie algebras, pseudo-Riemannian Hom-Lie algebras, Bochner theorem, Killing vectors.
}
\endgroup

\section{Introduction}

In Riemannian geometry, the classical Bochner theorem \cite{Boch} states that on
a compact Riemannian manifold with non-positive Ricci curvature, any Killing
vector field is parallel; moreover, if the Ricci curvature is strictly negative,
no non-trivial Killing vector field exists.

The notion of Hom-Lie algebra was introduced by Hartwig, Larsson, and Silvestrov
\cite{Hartwig2006} as a generalization of Lie algebras, motivated by the study of
$q$-deformations of Witt and Virasoro algebras in mathematical physics
\cite{Hu1}. The key feature of these structures is the presence of a
twisting linear map $\alpha_{\mathfrak{g}}$ that deforms the Jacobi identity;
when this map is the identity, one recovers the classical Lie algebra framework.
Since their introduction, Hom-Lie algebras have been extensively studied,
encompassing representation theory \cite{Ammar1,Makhlouf1}, cohomology,
deformation theory, categorical interpretations \cite{Sheng1},
connections with discrete and deformed vector fields \cite{Larsson1},
quantization \cite{Yau1}, and universal enveloping algebras \cite{Laurent2}.
Various geometric and algebraic extensions were further developed in
\cite{Cai1,Laurent1,Mandal1,Sahin1,Merati1}.

Due to the close relationship between Lie algebras and Lie groups, a
corresponding notion of Hom-Lie groups was introduced in \cite{Jian}, together
with associated Hom-Lie algebras and one-parameter Hom-Lie subgroups. Building on
this, geometric structures on Hom-Lie groups and algebras have been actively
investigated, including Para-Sasakian geometry, almost contact and Sasakian
structures, para-K\"{a}hler, coK\"{a}hler, and cosymplectic structures
\cite{Peyg1,Peyg2,Peyg3,Peyg4,Nour2,pey1}.

A central role in this programme is played by pseudo-Riemannian Hom-Lie algebras,
which generalize to the Hom-Lie setting the pseudo-Riemannian Lie algebras
introduced by Boucetta \cite{Bouc} in the study of linear Poisson structures on
duals of Lie algebras. Killing and conformal vectors in this context were first
defined in \cite{Peyg5}, in the study of (almost) Ricci solitons on
Lorentzian--Sasakian Hom-Lie groups.

The aim of the present paper is to investigate Killing vectors in
pseudo-Riemannian Hom-Lie algebras and to establish Bochner-type theorems
relating curvature to the existence of such vectors, following the framework
developed in \cite{Peyg4,Peyg3,Peyg1,Peyg2,Nour2,Peyg5,pey1,Peyg8}. 
Specifically, we study the Hom-Levi-Civita connection, the curvature tensor,
Ricci curvature, and sectional curvature, and prove Hom-analogues of the
classical Bochner theorem in both the Riemannian and Lorentzian cases.

\medskip\noindent\textbf{Organization of the paper.}
Section~2 recalls the main definitions and preliminary notions used throughout.
Section~3 establishes Hom-versions of the classical Bochner theorem in both the
Riemannian and Lorentzian settings.
Section~4 proves that the space of Killing vectors on a pseudo-Riemannian
Hom-Lie algebra forms a totally geodesic Hom-Lie subalgebra.

\section{Preliminaries on Hom-Lie algebra and pseudo-Riemannian Hom-Lie algebra}

In this section, we recall the fundamental definitions and properties of 
Hom-Lie algebras and pseudo-Riemannian Hom-Lie algebras that will be used 
throughout the paper.

A Hom-Lie algebra is a triple
\[
(\mathfrak{g}, [\cdot,\cdot]_{\mathfrak{g}}, \alpha_{\mathfrak{g}})
\]
consisting of a vector space $\mathfrak{g}$, a skew-symmetric bilinear map 
(the bracket)
\[
[\cdot,\cdot]_{\mathfrak{g}} : \mathfrak{g} \times \mathfrak{g} \to \mathfrak{g},
\]
and a linear map (algebra morphism) $\alpha_{\mathfrak{g}} : \mathfrak{g} \to 
\mathfrak{g}$ such that the Hom-Jacobi identity holds:
\[
[\alpha_{\mathfrak{g}}(x), [y,z]_{\mathfrak{g}}]_{\mathfrak{g}}
+ [\alpha_{\mathfrak{g}}(y), [z,x]_{\mathfrak{g}}]_{\mathfrak{g}}
+ [\alpha_{\mathfrak{g}}(z), [x,y]_{\mathfrak{g}}]_{\mathfrak{g}}
= 0, \quad \forall\, x,y,z \in \mathfrak{g}.
\]

For more details on Hom-Lie algebras, see \cite{Peyg4,Peyg1,Peyg8}.
\begin{remark}
By definition, if $( \mathfrak{g}; [. ; . ]_{ \mathfrak{g}}; \alpha_{ \mathfrak{g}})$ is Hom-Lie algebra, then 
\begin{equation}
\label{bracket 1}
\alpha_{ \mathfrak{g}}([x, y]_{ \mathfrak{g}}) = [\alpha_{ \mathfrak{g}}(x), \alpha_{ \mathfrak{g}}(y)], \forall \quad x, y \in  \mathfrak{g}
\end{equation}
\end{remark}
An important remark is that when $\alpha_{\mathfrak{g}} = \mathrm{Id}_{\mathfrak{g}}$, 
the Hom-Lie structure reduces to an ordinary Lie algebra, since the Hom-Jacobi 
identity becomes the classical Jacobi identity. The twisting map $\alpha_{\mathfrak{g}}$ 
thus plays the role of a deformation parameter, and much of the richness of the 
theory stems from the interplay between $\alpha_{\mathfrak{g}}$ and the bracket.

A central class of Hom-Lie algebras, particularly amenable to geometric 
investigations, is given by those for which the twisting map is not only invertible 
but also involutive. A Hom-Lie algebra is said to be regular (involutive) if 
the twisting map $\alpha_{\mathfrak{g}}$ is invertible ($\alpha_{ \mathfrak{g}}^2 = Id_{ \mathfrak{g}}$).
The involutive condition ensures that $\alpha_{\mathfrak{g}}$ acts as a linear involution on 
$\mathfrak{g}$, which greatly simplifies the algebraic and geometric structure. 
In particular, regularity allows one to define an associated Lie bracket in a 
canonical way (see Remark~\ref{rem:lie-bracket} below), and to develop a coherent 
theory of connections and curvature in the spirit of pseudo-Riemannian geometry.

A subspace $\mathfrak{h} \subseteq \mathfrak{g}$ is called a Hom-Lie subalgebra 
if it is stable under both the twisting map and the bracket, i.e.,
\[
\alpha_{\mathfrak{g}}(\mathfrak{h}) \subseteq \mathfrak{h},
\quad \text{and} \quad
[x,y]_{\mathfrak{g}} \in \mathfrak{h}, \quad \forall\, x,y \in \mathfrak{h}.
\]
In this case,
\[
\left(\mathfrak{h},\, [\cdot,\cdot]_{\mathfrak{g}}\big|_{\mathfrak{h}\times\mathfrak{h}},\, 
\alpha_{\mathfrak{g}}\big|_{\mathfrak{h}}\right)
\]
is again a Hom-Lie algebra. The stability condition on $\mathfrak{h}$ is thus 
two-fold: the subspace must be closed under the deformed bracket as in the 
classical case, but also invariant under the structural map $\alpha_{\mathfrak{g}}$. 
Throughout the sequel, we assume that all Hom-Lie algebras are regular.

\begin{remark}\label{rem:lie-bracket}
Let $(\mathfrak{g}, [\cdot,\cdot]_{\mathfrak{g}}, \alpha_{\mathfrak{g}})$ be a 
regular Hom-Lie algebra. Then the bilinear map defined by
\begin{equation}\label{bracket2}
[x,y]_{\mathrm{Lie}} 
:= 
\big[\alpha_{\mathfrak{g}}^{-1}(x),\, \alpha_{\mathfrak{g}}^{-1}(y)\big]_{\mathfrak{g}}=\alpha_{ \mathfrak{g}}^{-1}([x, y]_{ \mathfrak{g}}),
\quad \forall\, x,y \in \mathfrak{g},
\end{equation}
is a Lie bracket on $\mathfrak{g}$. Consequently, $(\mathfrak{g}, 
[\cdot,\cdot]_{\mathrm{Lie}})$ is a Lie algebra. This observation establishes 
a bridge between the Hom-Lie and classical Lie settings, and will be used 
repeatedly to transfer results between the two frameworks.
\end{remark}

We now turn to the notion of a metric structure compatible with the Hom-Lie 
algebra. The pseudo-Riemannian geometry of Lie algebras in particular, the 
study of left-invariant metrics on Lie groups has a long and rich history. 
Extending this framework to Hom-Lie algebras requires an additional compatibility 
condition between the metric and the twisting map, which ensures that $\alpha_{\mathfrak{g}}$ 
acts as an isometry with respect to the inner product.

\begin{definition}[\cite{Peyg5,Peyg3, Peyg6}]
Let $(\mathfrak{g}, [\cdot,\cdot]_{\mathfrak{g}}, \alpha_{\mathfrak{g}})$ be a 
finite-dimensional Hom-Lie algebra. A pseudo-Riemannian metric on $\mathfrak{g}$ 
is a bilinear, symmetric, and non-degenerate form
\[
\langle \cdot , \cdot \rangle : \mathfrak{g} \times \mathfrak{g} \to \mathbb{R}
\]
satisfying, for all $x,y \in \mathfrak{g}$,
\begin{equation}\label{isom}
\langle \alpha_{\mathfrak{g}}(x), \alpha_{\mathfrak{g}}(y) \rangle
=
\langle x , y \rangle.
\end{equation}
In this case, the quadruple
\[
(\mathfrak{g}, [\cdot,\cdot]_{\mathfrak{g}}, \alpha_{\mathfrak{g}}, \langle \cdot , \cdot \rangle)
\]
is called a pseudo-Riemannian Hom-Lie algebra.
\end{definition}

Condition~\eqref{isom} states precisely that $\alpha_{\mathfrak{g}}$ is an isometry 
of $(\mathfrak{g}, \langle \cdot, \cdot \rangle)$. This is the natural 
compatibility requirement between the metric and the algebraic structure, 
analogous to the condition that the Killing form of a semisimple Lie algebra 
is invariant under automorphisms. When $\langle \cdot, \cdot \rangle$ is 
positive definite, one obtains a Riemannian Hom-Lie algebra; in general, the 
signature may be indefinite, giving rise to Lorentzian when the signature is $(1, n-1)$ or more general 
pseudo-Riemannian structures.

\begin{remark}
Since $\alpha_{\mathfrak{g}}$ is invertible, condition~\eqref{isom} also implies 
that $\alpha_{\mathfrak{g}}^{-1}$ preserves the scalar product. Indeed, using 
\eqref{isom}, we have for all $x,y \in \mathfrak{g}$,
\[
\left\langle \alpha_{\mathfrak{g}}(x), \alpha_{\mathfrak{g}}(y) \right\rangle
=
\left\langle x,y \right\rangle.
\]
Replacing $x$ by $\alpha_{\mathfrak{g}}^{-1}(x)$ and $y$ by 
$\alpha_{\mathfrak{g}}^{-1}(y)$, we obtain
\[
\left\langle \alpha_{\mathfrak{g}}(\alpha_{\mathfrak{g}}^{-1}(x)),\, 
\alpha_{\mathfrak{g}}(\alpha_{\mathfrak{g}}^{-1}(y)) \right\rangle
=
\left\langle \alpha_{\mathfrak{g}}^{-1}(x),\, \alpha_{\mathfrak{g}}^{-1}(y) \right\rangle.
\]
Since $\alpha_{\mathfrak{g}} \circ \alpha_{\mathfrak{g}}^{-1} = \mathrm{Id}_{\mathfrak{g}}$, 
this yields
\begin{equation}
\label{isom2}
\left\langle x, y \right\rangle
=
\left\langle \alpha_{\mathfrak{g}}^{-1}(x),\, \alpha_{\mathfrak{g}}^{-1}(y) \right\rangle.
\end{equation}
Hence, both $\alpha_{\mathfrak{g}}$ and $\alpha_{\mathfrak{g}}^{-1}$ preserve 
the scalar product $\langle \cdot , \cdot \rangle$. This symmetric role of 
$\alpha_{\mathfrak{g}}$ and its inverse is a distinctive feature of the regular 
setting, and will prove essential in the study of the Hom-Levi-Civita connection.
\end{remark}

Once a compatible metric is fixed, one can develop a theory of connections in 
the Hom-Lie algebra setting. The appropriate notion of a torsion-free, 
metric-compatible connection is given by the following.

\cite{Peyg4, pey1,Peyg6} A Hom-Levi-Civita connection on a pseudo-Riemannian Hom-Lie algebra
\[
(\mathfrak{g}, [\cdot,\cdot]_{\mathfrak{g}}, \alpha_{\mathfrak{g}}, \langle \cdot , \cdot \rangle)
\]
is a unique connection $\nabla$ on $\mathfrak{g}$ given by the Koszul formula:
\begin{equation}\label{koszul}
2\left\langle \nabla_x y ,\, \alpha_{\mathfrak{g}}(z) \right\rangle
=
\left\langle [x,y]_{\mathfrak{g}}, \alpha_{\mathfrak{g}}(z) \right\rangle
+ \left\langle [z,y]_{\mathfrak{g}}, \alpha_{\mathfrak{g}}(x) \right\rangle
+ \left\langle [z,x]_{\mathfrak{g}}, \alpha_{\mathfrak{g}}(y) \right\rangle,
\end{equation}
for all $x,y,z \in \mathfrak{g}$, together with:
\begin{equation}\label{torsion-free}
[x,y]_{\mathfrak{g}} = \nabla_x y - \nabla_y x,
\end{equation}
\begin{equation}\label{compa}
\left\langle \nabla_x y ,\, \alpha_{\mathfrak{g}}(z) \right\rangle
=
- \left\langle \alpha_{\mathfrak{g}}(y),\, \nabla_x z \right\rangle.
\end{equation}
Equation~\eqref{torsion-free} is the torsion-free condition, which in the 
Hom-Lie setting takes the same form as in classical Riemannian geometry, while 
\eqref{compa} expresses the metric-compatibility of $\nabla$ with respect 
to $\alpha_{\mathfrak{g}}$. The Koszul formula \eqref{koszul} uniquely characterizes 
$\nabla$ as the analog of the classical Levi-Civita connection.

Moreover, the Hom-Levi-Civita connection satisfies the compatibility condition
\begin{equation}\label{preserv}
\alpha_{\mathfrak{g}}(\nabla_x y)
=
\nabla_{\alpha_{\mathfrak{g}}(x)} \alpha_{\mathfrak{g}}(y),
\end{equation}
from which we deduce
\begin{equation}\label{preserv2}
\alpha_{\mathfrak{g}}^{-1}(\nabla_x y)
=
\nabla_{\alpha_{\mathfrak{g}}^{-1}(x)} \alpha_{\mathfrak{g}}^{-1}(y).
\end{equation}
These equivariance properties show that $\nabla$ intertwines the action of 
$\alpha_{\mathfrak{g}}$ on $\mathfrak{g}$, and are the Hom-algebraic counterpart 
of the invariance of the Levi-Civita connection under isometries in classical 
geometry.

With the connection at hand, one defines the curvature tensor in the standard 
fashion, adapted to the Hom-Lie setting. The curvature tensor $R$ of $\nabla$ 
on a pseudo-Riemannian Hom-Lie algebra is defined by
\begin{equation}
R(x,y)z
=
\nabla_{\alpha_{\mathfrak{g}}(x)} \nabla_y z
- \nabla_{\alpha_{\mathfrak{g}}(y)} \nabla_x z
- \nabla_{[x,y]_{\mathfrak{g}}} \alpha_{\mathfrak{g}}(z),
\end{equation}
for all $x,y,z \in \mathfrak{g}$ (see \cite{Peyg6, Peyg5}). The presence of $\alpha_{\mathfrak{g}}$ in 
the definition reflects the twisted nature of the Hom-Jacobi identity and 
ensures that $R$ is compatible with the algebraic structure of $\mathfrak{g}$.

The curvature tensor is the fundamental invariant encoding the local geometry 
of the space. From it, one derives the classical notions of Ricci and scalar 
curvatures. Following \cite{Peyg4, Nour2,Peyg6}, the Ricci curvature tensor and scalar 
curvature are defined respectively by
\begin{equation}
\mathrm{Ric}(x,y) := \mathrm{tr}\big(z \mapsto R(x,y)z\big),
\end{equation}
\begin{equation}
r := \mathrm{tr}(\mathrm{Ric}).
\end{equation}
These quantities play a central role in the 
classification of pseudo-Riemannian Hom-Lie algebras and in the formulation 
of Einstein-type equations in this context.

Finally, the sectional curvature associated with a two-dimensional non degenerate subspace 
spanned by $x,y \in \mathfrak{g}$ is given by
\begin{equation}\label{sectio}
K(x,y)
=
\frac{
\left\langle R(x,y)y ,\, \alpha_{\mathfrak{g}}^2(x) \right\rangle
}{
\|x\|^2 \|y\|^2 - \left\langle x , y \right\rangle^2
},
\end{equation}
where $\|x\|^2 = \left\langle x, x \right\rangle$ (see\cite{Peyg4, Peyg5}). 
Note that $K(x,y) = K(y,x).$ This follows from properties of the curvature tensor $R$ (see \cite[Page 15]{Peyg4}).
\section{Bochner-Type Theorems}

Killing and conformal vectors in pseudo-Riemannian Hom-Lie algebras were
introduced in \cite{Peyg5} in the study of (almost) Ricci solitons on
Lorentzian--Sasakian Hom-Lie groups. We investigate here further properties
of such vectors, establishing in particular a curvature identity that leads
to several Bochner-type theorems in both the Riemannian and
Lorentzian settings.

\begin{definition}[\cite{Peyg5}]
Let $(\mathfrak{g};\,[.\,;\,.]_{\mathfrak{g}};\,\alpha_{\mathfrak{g}},\langle\,.\,,\,.\,\rangle)$
be a pseudo-Riemannian Hom-Lie algebra. A vector $v \in \mathfrak{g}$ is
called \emph{conformal} if there exists a real scalar $\rho$ such that for
all $x, y \in \mathfrak{g}$,
\begin{equation}
  \langle \nabla_x v,\,\alpha_{\mathfrak{g}}(y) \rangle
  + \langle \nabla_y v,\,\alpha_{\mathfrak{g}}(x) \rangle
  = 2\rho\,\langle x,\,\alpha_{\mathfrak{g}}^2(y) \rangle.
\end{equation}
The vector $v$ is said to be \emph{Killing} if $\rho = 0$, i.e.,
\begin{equation}
\label{kill}
  \langle \nabla_x v,\,\alpha_{\mathfrak{g}}(y) \rangle
  = -\langle \nabla_y v,\,\alpha_{\mathfrak{g}}(x) \rangle
\end{equation}
for all $x, y \in \mathfrak{g}$.
\end{definition}

We prove the following result, which will be used throughout the paper.

\begin{lemma}
\label{lem kil}
Let $(\mathfrak{g};\,[.\,;\,.]_{\mathfrak{g}};\,\alpha_{\mathfrak{g}},\langle\,.\,,\,.\,\rangle)$
be a pseudo-Riemannian Hom-Lie algebra and let $v$ be a Killing vector. Then:
\begin{itemize}
  \item[i)]  $\nabla_v v = 0$;
  \item[ii)] $\alpha_{\mathfrak{g}}(v)$ and $\alpha_{\mathfrak{g}}^{-1}(v)$ are
             also Killing vectors.
\end{itemize}
\end{lemma}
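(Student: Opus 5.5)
For part i), specialise the Killing identity \eqref{kill} to $x=y=v$. This gives $\langle \nabla_v v,\alpha_{\mathfrak{g}}(v)\rangle=0$, which only shows orthogonality, so a better choice of test vectors is needed. Take $x=v$ and $y$ arbitrary:
\[
\langle \nabla_v v,\alpha_{\mathfrak{g}}(y)\rangle=-\langle \nabla_y v,\alpha_{\mathfrak{g}}(v)\rangle .
\]
By the compatibility condition \eqref{compa} with $x\to y$, $y\to v$, $z\to v$, we have $\langle \nabla_y v,\alpha_{\mathfrak{g}}(v)\rangle=-\langle \alpha_{\mathfrak{g}}(v),\nabla_y v\rangle$. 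Since the form is symmetric, this quantity equals its own negative and therefore vanishes. Hence $\langle \nabla_v v,\alpha_{\mathfrak{g}}(y)\rangle=0$ for all $y$. Because $\alpha_{\mathfrak{g}}$ is invertible, $\alpha_{\mathfrak{g}}(y)$ ranges over all of $\mathfrak{g}$, and non-degeneracy of the metric gives $\nabla_v v=0$.

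For part ii), I will use the equivariance \eqref{preserv} together with the isometry property \eqref{isom}. For arbitrary $x,y$, write $x=\alpha_{\mathfrak{g}}(x')$ and $y=\alpha_{\mathfrak{g}}(y')$, which is possible by regularity. Then
\[
\langle \nabla_x \alpha_{\mathfrak{g}}(v),\alpha_{\mathfrak{g}}(y)\rangle
=\langle \alpha_{\mathfrak{g}}(\nabla_{x'}v),\alpha_{\mathfrak{g}}(\alpha_{\mathfrak{g}}(y'))\rangle
=\langle \nabla_{x'}v,\alpha_{\mathfrak{g}}(y')\rangle .
\]
The first equality uses \eqref{preserv} and the second uses \eqref{isom}. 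Applying \eqref{kill} to $v$ with $x',y'$ and reversing the same two steps turns this into $-\langle \nabla_y \alpha_{\mathfrak{g}}(v),\alpha_{\mathfrak{g}}(x)\rangle$. Therefore $\alpha_{\mathfrak{g}}(v)$ is Killing.

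The argument for $\alpha_{\mathfrak{g}}^{-1}(v)$ is identical. Use \eqref{preserv2} and \eqref{isom2} in place of \eqref{preserv} and \eqref{isom}, substituting $x=\alpha_{\mathfrak{g}}^{-1}(x')$ and $y=\alpha_{\mathfrak{g}}^{-1}(y')$.

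No step looks hard. The only point needing care is part i): the naive substitution $x=y=v$ is insufficient, and one must notice that $\langle \nabla_y v,\alpha_{\mathfrak{g}}(v)\rangle$ vanishes by \eqref{compa} before non-degeneracy can be applied.
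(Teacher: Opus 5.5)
Your proposal is correct and follows essentially the same route as the paper. In (i) you set $x=v$ in the Killing identity, kill the right-hand side via \eqref{compa}, and conclude by invertibility of $\alpha_{\mathfrak{g}}$ and non-degeneracy. In (ii) you transport the Killing identity through $\alpha_{\mathfrak{g}}$ using the equivariance of $\nabla$ and the isometry property; the paper writes this with \eqref{preserv2} and \eqref{isom2} instead of substituting $x=\alpha_{\mathfrak{g}}(x')$, $y=\alpha_{\mathfrak{g}}(y')$, but it is the same computation.
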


\begin{proof}
i) Since $v$ is Killing, setting $x = v$ in~\eqref{kill} gives
\[
  \langle \nabla_v v,\,\alpha_{\mathfrak{g}}(y) \rangle
  = -\langle \nabla_y v,\,\alpha_{\mathfrak{g}}(v) \rangle
\]
for all $y \in \mathfrak{g}$. From~\eqref{compa} we have
$\langle \nabla_y v,\,\alpha_{\mathfrak{g}}(v) \rangle = 0$, so
$\langle \nabla_v v,\,\alpha_{\mathfrak{g}}(y) \rangle = 0$ for all
$y \in \mathfrak{g}$. Since $\alpha_{\mathfrak{g}}$ is invertible, this is
equivalent to $\langle \nabla_v v,\,z \rangle = 0$ for all $z \in \mathfrak{g}$,
hence $\nabla_v v = 0$.

ii) We prove only that $\alpha_{\mathfrak{g}}(v)$ is Killing; the argument for
$\alpha_{\mathfrak{g}}^{-1}(v)$ is analogous. For all $x, y \in \mathfrak{g}$,
using~\eqref{isom2} and~\eqref{preserv2} we have
\[
  \langle \nabla_x \alpha_{\mathfrak{g}}(v),\,\alpha_{\mathfrak{g}}(y) \rangle
  = \langle \nabla_{\alpha_{\mathfrak{g}}^{-1}(x)} v,\,y \rangle.
\]
Writing $y = \alpha_{\mathfrak{g}}(\alpha_{\mathfrak{g}}^{-1}(y))$ and applying
the Killing condition for $v$ yields
\[
  \langle \nabla_{\alpha_{\mathfrak{g}}^{-1}(x)} v,\,y \rangle
  = -\langle \nabla_{\alpha_{\mathfrak{g}}^{-1}(y)} v,\,x \rangle
  = -\langle \nabla_y \alpha_{\mathfrak{g}}(v),\,\alpha_{\mathfrak{g}}(x) \rangle,
\]
which proves that $\alpha_{\mathfrak{g}}(v)$ is Killing.
\end{proof}

Before stating the main theorems, we illustrate the influence of curvature on
the existence of Killing vector fields. The following result will be essential
in the sequel.

\begin{proposition}
Let $(\mathfrak{g};\,[.\,;\,.]_{\mathfrak{g}};\,\alpha_{\mathfrak{g}},\langle\,.\,,\,.\,\rangle)$
be a pseudo-Riemannian Hom-Lie algebra and let $v$ be a Killing vector. Then
for any $x \in \mathfrak{g}$,
\begin{equation}
\label{import}
  \langle R(x,v)v,\,\alpha_{\mathfrak{g}}^2(x) \rangle
  = \langle \nabla_x v,\,\nabla_x v \rangle.
\end{equation}
In particular, if $x$ and $v$ span a non degenerate plane, then
\begin{equation}
\label{import2}
K(x,v)
=
\frac{\langle \nabla_x v,\nabla_x v \rangle}
{\|x\|^2\|v\|^2 - \langle x,v \rangle^2}
\end{equation}
\end{proposition}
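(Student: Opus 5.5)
The plan is to expand the curvature tensor directly from its definition with $y=z=v$,
\[
R(x,v)v=\nabla_{\alpha_{\mathfrak{g}}(x)}\nabla_v v-\nabla_{\alpha_{\mathfrak{g}}(v)}\nabla_x v-\nabla_{[x,v]_{\mathfrak{g}}}\alpha_{\mathfrak{g}}(v),
\]
pair it with $\alpha_{\mathfrak{g}}^2(x)=\alpha_{\mathfrak{g}}(\alpha_{\mathfrak{g}}(x))$, and treat the three terms separately. The tools are Lemma~\ref{lem kil}, the compatibility condition \eqref{compa}, the equivariance \eqref{preserv}, the isometry property \eqref{isom}, and the torsion-free condition \eqref{torsion-free}.

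\textbf{First term.} It vanishes at once, since $\nabla_v v=0$ by Lemma~\ref{lem kil}~i).

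\textbf{Second term.} Apply \eqref{compa} with $y=\nabla_x v$ and $z=\alpha_{\mathfrak{g}}(x)$ to get
\[
\langle \nabla_{\alpha_{\mathfrak{g}}(v)}\nabla_x v,\alpha_{\mathfrak{g}}(\alpha_{\mathfrak{g}}(x))\rangle=-\langle \alpha_{\mathfrak{g}}(\nabla_x v),\nabla_{\alpha_{\mathfrak{g}}(v)}\alpha_{\mathfrak{g}}(x)\rangle .
\]
By \eqref{preserv}, $\nabla_{\alpha_{\mathfrak{g}}(v)}\alpha_{\mathfrak{g}}(x)=\alpha_{\mathfrak{g}}(\nabla_v x)$. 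Then \eqref{isom} turns the right-hand side into $-\langle\nabla_x v,\nabla_v x\rangle$. With the minus sign in front of the term, the second term therefore contributes $+\langle\nabla_x v,\nabla_v x\rangle$.

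\textbf{Third term.} This is the delicate step, because here the Killing property must be used rather than just metric compatibility. By Lemma~\ref{lem kil}~ii), $\alpha_{\mathfrak{g}}(v)$ is Killing. Applying \eqref{kill} to it with arguments $[x,v]_{\mathfrak{g}}$ and $\alpha_{\mathfrak{g}}(x)$ gives
\[
-\langle \nabla_{[x,v]_{\mathfrak{g}}}\alpha_{\mathfrak{g}}(v),\alpha_{\mathfrak{g}}(\alpha_{\mathfrak{g}}(x))\rangle=\langle \nabla_{\alpha_{\mathfrak{g}}(x)}\alpha_{\mathfrak{g}}(v),\alpha_{\mathfrak{g}}([x,v]_{\mathfrak{g}})\rangle .
\]
Using \eqref{preserv} and \eqref{isom} again, this equals $\langle\nabla_x v,[x,v]_{\mathfrak{g}}\rangle$. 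By \eqref{torsion-free} it becomes $\langle\nabla_x v,\nabla_x v\rangle-\langle\nabla_x v,\nabla_v x\rangle$.

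\textbf{Conclusion.} Adding the three contributions, the cross terms $\pm\langle\nabla_x v,\nabla_v x\rangle$ cancel, which yields \eqref{import}. The main care needed is bookkeeping of the signs and of where $\alpha_{\mathfrak{g}}$ sits in each application of \eqref{compa} and \eqref{kill}. In particular, the third term should not be attacked with \eqref{compa}, which leads nowhere; the Killing property of $\alpha_{\mathfrak{g}}(v)$ is what converts it into an expression involving $\nabla_x v$.

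Finally, \eqref{import2} follows immediately by substituting \eqref{import} into the definition \eqref{sectio} of $K(x,v)$. This is legitimate because $x$ and $v$ span a non-degenerate plane, so the denominator $\|x\|^2\|v\|^2-\langle x,v\rangle^2$ is nonzero.
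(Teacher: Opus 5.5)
Your proof is correct and follows the same approach as the paper. You expand $R(x,v)v$ into the same three terms, kill the first with $\nabla_v v=0$, and cancel the cross terms $\pm\langle\nabla_x v,\nabla_v x\rangle$. The only difference is bookkeeping: you apply \eqref{compa} directly and use the Killing property of $\alpha_{\mathfrak{g}}(v)$ (Lemma~\ref{lem kil}~ii)), whereas the paper first transports by $\alpha_{\mathfrak{g}}^{-1}$ via \eqref{isom2} and \eqref{preserv2} and then uses the Killing property of $v$ itself.
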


\begin{proof}
We have
$R(x,v)v = \nabla_{\alpha_{\mathfrak{g}}(x)}\nabla_v v
           - \nabla_{\alpha_{\mathfrak{g}}(v)}\nabla_x v
           - \nabla_{[x,v]_{\mathfrak{g}}}\alpha_{\mathfrak{g}}(v)$.
By Lemma~\ref{lem kil}(i), $\nabla_v v = 0$, so
\[
  R(x,v)v = -\nabla_{\alpha_{\mathfrak{g}}(v)}\nabla_x v
             - \nabla_{[x,v]_{\mathfrak{g}}}\alpha_{\mathfrak{g}}(v),
\]
and therefore
\[
  \langle R(x,v)v,\,\alpha_{\mathfrak{g}}^2(x) \rangle
  = -\langle \nabla_{\alpha_{\mathfrak{g}}(v)}\nabla_x v,\,\alpha_{\mathfrak{g}}^2(x) \rangle
    -\langle \nabla_{[x,v]_{\mathfrak{g}}}\alpha_{\mathfrak{g}}(v),\,\alpha_{\mathfrak{g}}^2(x) \rangle.
\]
We treat each term separately.

\medskip\noindent\textit{First term.}
Using~\eqref{isom2} and~\eqref{preserv2},
\[
  \langle \nabla_{\alpha_{\mathfrak{g}}(v)}\nabla_x v,\,\alpha_{\mathfrak{g}}^2(x) \rangle
  = \langle \nabla_v \nabla_{\alpha_{\mathfrak{g}}^{-1}(x)}\alpha_{\mathfrak{g}}^{-1}(v),\,\alpha_{\mathfrak{g}}(x) \rangle.
\]
Applying~\eqref{compa} and~\eqref{preserv} then gives
\[
  \langle \nabla_v \nabla_{\alpha_{\mathfrak{g}}^{-1}(x)}\alpha_{\mathfrak{g}}^{-1}(v),\,\alpha_{\mathfrak{g}}(x) \rangle
  = -\langle \nabla_x v,\,\nabla_v x \rangle,
\]
so the contribution of the first term is $\langle \nabla_x v,\,\nabla_v x \rangle$.

\medskip\noindent\textit{Second term.}
Using~\eqref{isom2}, \eqref{preserv2}, and~\eqref{bracket2},
\[
  \langle \nabla_{[x,v]_{\mathfrak{g}}}\alpha_{\mathfrak{g}}(v),\,\alpha_{\mathfrak{g}}^2(x) \rangle
  = \langle \nabla_{[\alpha_{\mathfrak{g}}^{-1}(x),\alpha_{\mathfrak{g}}^{-1}(v)]} v,\,\alpha_{\mathfrak{g}}(x) \rangle.
\]
Since $v$ is Killing and by~\eqref{bracket 1},
\[
  \langle \nabla_{[\alpha_{\mathfrak{g}}^{-1}(x),\alpha_{\mathfrak{g}}^{-1}(v)]} v,\,\alpha_{\mathfrak{g}}(x) \rangle
  = -\langle \nabla_x v,\,[x,v]_{\mathfrak{g}} \rangle.
\]
From~\eqref{torsion-free}, $[x,v]_{\mathfrak{g}} = \nabla_x v - \nabla_v x$, so the
contribution of the second term is
$\langle \nabla_x v,\,\nabla_x v \rangle - \langle \nabla_x v,\,\nabla_v x \rangle$.

\medskip
Summing the two contributions yields~\eqref{import}. Equality~\eqref{import2}
follows from~\eqref{sectio}.
\end{proof}

We recall the classical Bochner theorem on Killing vector fields in Riemannian
geometry.

\begin{theorem}[\cite{Boch}]
Let $M$ be a compact Riemannian manifold with non-positive Ricci curvature.
Then every Killing vector field on $M$ is parallel. Moreover, if the Ricci
curvature is negative, then there exists no non-trivial Killing vector field.
\end{theorem}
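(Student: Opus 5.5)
The statement is the classical Bochner theorem (\cite{Boch}), which the paper recalls rather than proves. Still, I would prove it by the standard Bochner technique: a Weitzenböck identity for a Killing field, integrated over the compact manifold $M$.

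\emph{Step 1 (second-order identity).} Let $X$ be a Killing field, so $\nabla X$ is skew-adjoint: $\langle \nabla_Y X, Z\rangle = -\langle \nabla_Z X, Y\rangle$. This plays the role of \eqref{kill}. From it one derives the standard identity
\[
\nabla^2_{Y,Z}X = -R(X,Y)Z .
\]
Tracing over an orthonormal frame gives
\[
\Delta X := \mathrm{tr}\,\nabla^2 X = -\mathrm{Ric}(X).
\]

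\emph{Step 2 (Bochner formula).} Consider the function $f=\tfrac12|X|^2$. Computing its Laplacian gives
\[
\Delta f = |\nabla X|^2 + \langle \Delta X, X\rangle = |\nabla X|^2 - \mathrm{Ric}(X,X).
\]
This is the manifold analogue of the pointwise identity \eqref{import}, $\langle R(x,v)v,\alpha^2_{\mathfrak g}(x)\rangle=\langle\nabla_x v,\nabla_x v\rangle$. Summing \eqref{import} over an orthonormal basis is exactly the algebraic version of this formula.

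\emph{Step 3 (integration).} Since $M$ is compact without boundary, the divergence theorem gives $\int_M \Delta f = 0$. Hence
\[
\int_M |\nabla X|^2 = \int_M \mathrm{Ric}(X,X).
\]
If $\mathrm{Ric}\le 0$, the right side is $\le 0$ while the left side is $\ge 0$. So both vanish, and $\nabla X=0$, i.e.\ $X$ is parallel. In addition $\mathrm{Ric}(X,X)\equiv 0$. If $\mathrm{Ric}<0$, this forces $X\equiv 0$ pointwise.

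The main obstacle is Step 1, the derivation of $\nabla^2 X = -R(X,\cdot)\cdot$. I would obtain it by differentiating the Killing equation $\mathcal L_X g=0$. Then I would permute the three arguments and use the first Bianchi identity, mirroring how \eqref{import} was obtained from \eqref{compa} and torsion-freeness in the proof of the Proposition above. The sign conventions for $R$ must be tracked so that the trace produces $-\mathrm{Ric}(X)$.
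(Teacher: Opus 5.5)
The paper gives no proof of this statement: it is quoted from Bochner's work only as motivation for Theorems~\ref{Boch Hom} and~\ref{Boch inv}, so there is no internal argument to compare against. Your proof is the standard Bochner technique, and it is correct.

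\emph{Step~1.} Write $S(W,Y,Z)=\langle\nabla^2_{W,Y}X,Z\rangle$. The differentiated Killing condition makes $S$ skew in its last two slots. The Ricci identity gives $S(W,Y,Z)-S(Y,W,Z)=\langle R(W,Y)X,Z\rangle$. Alternating these two rules returns you to $-S(W,Y,Z)$ plus three curvature terms, and the first Bianchi identity collapses these to $2\langle R(W,X)Y,Z\rangle$. Hence $\nabla^2_{W,Y}X=R(W,X)Y=-R(X,W)Y$, as you claim.

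\emph{Steps~2--3.} With $R(X,Y)=[\nabla_X,\nabla_Y]-\nabla_{[X,Y]}$, the trace is indeed $-\mathrm{Ric}(X)$. Steps~2--3 then go through on a closed manifold exactly as written.

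It is instructive to set your argument beside the paper's Hom-analogue, Theorem~\ref{Boch inv}. There \eqref{import} holds pointwise and is simply summed over an orthonormal basis, with no integration, because Lemma~\ref{lem kil}(i) gives $\nabla_v v=0$. On a manifold the corresponding vector field is $\nabla_X X=-\tfrac12\,\mathrm{grad}\,|X|^2$, which need not vanish. Tracing $\langle R(Y,X)X,Y\rangle=\langle\nabla_Y\nabla_X X,Y\rangle+|\nabla_Y X|^2$ therefore produces the extra term $\mathrm{div}(\nabla_X X)=-\Delta f$. That is precisely the term your Step~2 carries and your Step~3 kills via the divergence theorem.

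So your remark that summing \eqref{import} is the algebraic version of the Bochner formula is right, with the caveat that the algebraic version has no $\Delta f$ term. Compactness without boundary in the classical theorem does the job that the identity $\nabla_v v=0$ does in the Hom-Lie setting. This is why the paper's versions need no compactness hypothesis.
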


In the spirit of this classical result, we now establish its analogue in the
Hom-Lie setting.

\begin{theorem}[Bochner theorem for Hom-Lie algebras]
\label{Boch Hom}
Let $(\mathfrak{g};\,[.\,;\,.]_{\mathfrak{g}};\,\alpha_{\mathfrak{g}},\langle\,.\,,\,.\,\rangle)$
be a Riemannian Hom-Lie algebra.
\begin{itemize}
  \item[i)]  If $\mathfrak{g}$ has negative sectional curvature, then it admits
             no non-trivial Killing vector.
  \item[ii)] If $\mathfrak{g}$ has non-positive sectional curvature, then every
             Killing vector $v$ is parallel, i.e., $\nabla_x v = 0$ for all
             $x \in \mathfrak{g}$.
\end{itemize}
\end{theorem}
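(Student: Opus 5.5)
The plan is to use the Proposition just proved, i.e.\ identity~\eqref{import} and its consequence~\eqref{import2}, together with positive definiteness of the metric. In the Riemannian case, every pair of linearly independent vectors $x,v$ spans a non-degenerate plane. By Cauchy--Schwarz, the denominator $\|x\|^2\|v\|^2-\langle x,v\rangle^2$ is then strictly positive. The numerator $\langle \nabla_x v,\nabla_x v\rangle$ is non-negative, and it vanishes if and only if $\nabla_x v=0$. So the sign of $K(x,v)$ is governed entirely by $\|\nabla_x v\|^2$, and I would reduce both parts of Theorem~\ref{Boch Hom} to this observation.

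For ii), I fix a Killing vector $v$ and an arbitrary $x\in\mathfrak{g}$, and split into two cases.
\begin{itemize}
\item[\textbullet] If $x$ and $v$ are linearly independent, non-positive sectional curvature gives $K(x,v)\le 0$. By~\eqref{import2} and positivity of the denominator, this forces $\|\nabla_x v\|^2\le 0$, hence $\nabla_x v=0$.
\item[\textbullet] If $x=\lambda v$, then $\nabla_x v=\lambda\nabla_v v=0$ by Lemma~\ref{lem kil}(i).
\end{itemize}
This covers all $x$ and shows $v$ is parallel. Alternatively, identity~\eqref{import} can be used directly without dividing: $\langle R(x,v)v,\alpha_{\mathfrak{g}}^2(x)\rangle\le 0$ whenever $x,v$ are independent. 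I would state the curvature hypothesis in the form $K\le 0$ on all planes, as in~\eqref{sectio}.

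For i), suppose $K<0$ and $v\neq 0$ is Killing. Since $K<0$ in particular implies $K\le 0$, part ii) gives $\nabla_x v=0$ for all $x$. Now choose any $x$ linearly independent of $v$, which is possible when $\dim\mathfrak{g}\ge 2$. Then~\eqref{import2} gives $K(x,v)=0$, contradicting $K(x,v)<0$. Hence $v=0$. The degenerate case $\dim\mathfrak{g}\le 1$ has no planes, so the curvature hypothesis is vacuous there. I would state explicitly that the theorem presupposes $\dim\mathfrak{g}\ge 2$, which is implicit in speaking of sectional curvature.

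The main obstacle is a subtle point rather than a computation. One must make sure~\eqref{import2} genuinely applies to every independent pair; this is where the Riemannian, definite-signature hypothesis enters, since it guarantees non-degeneracy and a positive denominator. One must also handle the case where $x$ is parallel to $v$ separately, using Lemma~\ref{lem kil}(i). Apart from these checks, the argument is immediate from the Proposition.
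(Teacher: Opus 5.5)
Your proof is correct and follows the paper's argument. Part ii) is exactly the paper's case split: for independent $x$ you use \eqref{import2} with a positive denominator, and for $x=\lambda v$ you use Lemma~\ref{lem kil}(i). For part i) you pass through ii) to get $K(x,v)=0$, whereas the paper reads off $\langle\nabla_x v,\nabla_x v\rangle<0$ directly, which is the same observation. Your remark that i) needs $\dim\mathfrak{g}\ge 2$ is a valid refinement the paper leaves implicit: in dimension one the bracket and $\nabla$ vanish, so every vector is Killing while the curvature hypothesis holds vacuously.
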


\begin{proof}
i) Suppose there exists a non-trivial Killing vector $v$ and choose $x$ not
proportional to $v$. Then $K(x,v) < 0$, and since $\langle\,.\,,\,.\,\rangle$
is positive definite, $(\|x\|^2\|v\|^2 - \langle x,v \rangle^2) > 0$.
From~\eqref{import2} we get $\langle \nabla_x v,\,\nabla_x v \rangle < 0$,
contradicting positive definiteness.

ii) For any Killing vector $v$, equation~\eqref{import2} and the non-positive
sectional curvature give $\langle \nabla_x v,\,\nabla_x v \rangle \leq 0$ if $x$ is not proportional to $v$.
Since $\langle\,.\,,\,.\,\rangle$ is positive definite, we obtain $\nabla_x v = 0$. In the case where $x=\lambda v$, Lemma~\ref{lem kil} implies that
\[
\nabla_x v=\lambda \nabla_v v=0.
\]
\end{proof}

For involutive Hom-Lie algebras, the sectional curvature condition can be
replaced by a Ricci curvature condition.

\begin{theorem}[Bochner theorem for involutive Hom-Lie algebras]
\label{Boch inv}
Let $(\mathfrak{g};\,[.\,;\,.]_{\mathfrak{g}};\,\alpha_{\mathfrak{g}},\langle\,.\,,\,.\,\rangle)$
be an involutive Riemannian Hom-Lie algebra.
\begin{itemize}
  \item[i)]  If $\mathfrak{g}$ has negative Ricci curvature, then it admits no
             non-trivial Killing vector.
  \item[ii)] If $\mathfrak{g}$ has non-positive Ricci curvature, then every
             Killing vector $v$ is parallel, i.e., $\nabla_x v = 0$ for all
             $x \in \mathfrak{g}$.
\end{itemize}
\end{theorem}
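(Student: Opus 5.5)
The plan is to reduce the Ricci condition to the identity~\eqref{import} by summing it over an orthonormal basis. We need to evaluate $\mathrm{Ric}(v,v)$, or more precisely the symmetric quantity obtained by tracing $x\mapsto R(x,v)v$, for a Killing vector $v$.

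Fix an orthonormal basis $(e_1,\dots,e_n)$ of $(\mathfrak{g},\langle\cdot,\cdot\rangle)$. Since $\alpha_{\mathfrak{g}}$ is an isometry with $\alpha_{\mathfrak{g}}^2=\mathrm{Id}_{\mathfrak{g}}$, the family $(\alpha_{\mathfrak{g}}^2(e_i))=(e_i)$ is again orthonormal. Applying~\eqref{import} with $x=e_i$ and summing over $i$ gives
\[
\sum_{i=1}^n \langle R(e_i,v)v,\, e_i\rangle \;=\; \sum_{i=1}^n \langle \nabla_{e_i} v,\,\nabla_{e_i} v\rangle .
\]
The left-hand side is $\mathrm{tr}\big(x\mapsto R(x,v)v\big)$. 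Involutivity is exactly what makes $\alpha_{\mathfrak{g}}^2(e_i)=e_i$, so that the sum really is a trace. This is where the hypothesis enters.

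The main obstacle is to identify this trace with the Ricci curvature as defined in the paper, $\mathrm{Ric}(x,y)=\mathrm{tr}(z\mapsto R(x,y)z)$. I would interpret ``Ricci curvature'' in the usual way, $\mathrm{Ric}(v,v)=\mathrm{tr}(x\mapsto R(x,v)v)$. To reconcile this with the paper's formula, I would try to use the antisymmetry $R(x,y)=-R(y,x)$ together with the curvature symmetries of \cite{Peyg4} (the ones already invoked for $K(x,y)=K(y,x)$), and adopt that convention explicitly. If needed, I would phrase the hypothesis as $\sum_i\langle R(e_i,v)v,e_i\rangle<0$ (respectively $\le 0$), noting that this equals $\sum_{i}K(e_i,v)(\|v\|^2-\langle e_i,v\rangle^2)$ over the $e_i$ not proportional to $v$. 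Alternatively, one can choose $e_1=v/\|v\|$: then $\nabla_{e_1}v=0$ by Lemma~\ref{lem kil}(i), and the sum reduces to $\sum_{i\ge2}\|v\|^2K(e_i,v)$.

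With the identity in hand, the conclusions follow as in Theorem~\ref{Boch Hom}. For ii), if $\mathrm{Ric}(v,v)\le 0$, then $\sum_i\|\nabla_{e_i}v\|^2\le 0$. Positive definiteness forces $\nabla_{e_i}v=0$ for every $i$, and by linearity $\nabla_xv=0$ for all $x$. For i), if $v\neq0$ then $\mathrm{Ric}(v,v)<0$ would make a sum of nonnegative terms strictly negative, which is a contradiction. Hence $v=0$.
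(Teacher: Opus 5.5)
Your proposal is correct and follows the paper's proof essentially verbatim: specialize \eqref{import} using $\alpha_{\mathfrak{g}}^2=\mathrm{Id}_{\mathfrak{g}}$, sum over an orthonormal basis to get $\mathrm{Ric}(v,v)=\sum_i\langle\nabla_{e_i}v,\nabla_{e_i}v\rangle$, and conclude by positive definiteness. Your worry about the Ricci convention is justified, and it cannot be removed by curvature symmetries: the literal formula $\mathrm{tr}(z\mapsto R(v,v)z)$ vanishes identically because $R(v,v)=0$, so the right fix is to adopt $\mathrm{Ric}(v,v)=\mathrm{tr}(x\mapsto R(x,v)v)$, which is exactly the convention the paper uses tacitly in its proof.
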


\begin{proof}
Since $\mathfrak{g}$ is involutive, $\alpha_{\mathfrak{g}}^2(x) = x$, and~\eqref{import}
becomes
\[
  \langle R(x,v)v,\,x \rangle = \langle \nabla_x v,\,\nabla_x v \rangle,
  \quad \forall\, x \in \mathfrak{g}.
\]
Taking an orthonormal basis $(e_i)_{1 \leq i \leq n}$, we get
\[
  \mathrm{Ric}(v,v) = \sum_{i=1}^n \langle \nabla_{e_i} v,\,\nabla_{e_i} v \rangle.
\]
If (i) holds and there exists a non-trivial Killing vector, then
$\mathrm{Ric}(v,v) < 0$ whereas $\sum_{i=1}^n \langle \nabla_{e_i} v,\,\nabla_{e_i} v \rangle \geq 0$,
a contradiction.

If (ii) holds and there exists a Killing vector, then
$\sum_{i=1}^n \langle \nabla_{e_i} v,\,\nabla_{e_i} v \rangle \leq 0$.
Since $\langle\,.\,,\,.\,\rangle$ is positive definite, we get $\nabla_{e_i} v = 0$
for each $e_i$, and by linearity $\nabla_x v = 0$ for all $x \in \mathfrak{g}$.
\end{proof}

In the Lorentzian setting, the analogous of Bochner theorem  were obtained in \cite{Romeo} for timelike Killing vector field (see Example 3.5, Corollary 3.8, proof of Corollary 3.10). We can summarize the results in the following.
\begin{theorem}
    let M be a compact lorentzian manifold, such that $Ric(X,X)<0 $ for all timelike vector X, then there exists no timelike Killing vector field.
If  $Ric(X,X) \leq 0$ for all timelike vector $X$,
Then any timelike Killing vector field is parallel.

\end{theorem}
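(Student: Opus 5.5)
This is the classical Lorentzian statement from \cite{Romeo}, so the plan is to reconstruct the Bochner argument on a compact Lorentzian manifold $M$ adapted to a timelike Killing field $V$. The point to exploit is this: the metric is indefinite, but the $g$-orthogonal complement $V^\perp$ of a timelike vector is spacelike, hence positive definite. The key steps, in order, are as follows.

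\textbf{Step 1: the Bochner formula.} Let $A = \nabla V$, viewed as the $(1,1)$-tensor $X \mapsto \nabla_X V$. The Killing equation says $A$ is $g$-skew. Killing fields satisfy $\nabla^2_{X,Y}V = R(X,V)Y$, and contracting this gives $\Delta V = -\mathrm{Ric}(V)$. Compute
\[
\tfrac12 \Delta\, g(V,V) = g(\nabla V,\nabla V) - \mathrm{Ric}(V,V),
\]
where $g(\nabla V,\nabla V) = \mathrm{tr}(A^* A) = -\mathrm{tr}(A^2)$. Integrate over compact $M$; the divergence term vanishes, leaving
\[
\int_M \mathrm{Ric}(V,V) = \int_M g(\nabla V,\nabla V).
\]
Compare this with \eqref{import} in the algebraic case.

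\textbf{Step 2: sign of $g(\nabla V,\nabla V)$ when $V$ is timelike.} This is the main obstacle, because in Lorentzian signature $g(\nabla V,\nabla V)$ has no a priori sign. I would decompose $TM = \mathbb{R}V \oplus V^\perp$. Skewness gives $g(AV,V) = 0$, so $AV \in V^\perp$. Moreover, with $f = g(V,V) < 0$, one has $AV = \nabla_V V = -\tfrac12 \nabla f$.

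Write $A$ in block form: an off-diagonal part $w = AV$ and a skew block $B$ on $V^\perp$. Then $-\mathrm{tr}(A^2) = |B|^2 - 2|w|^2/|f|$, where the norms are taken in the Riemannian metric on $V^\perp$. This has a bad sign in the $w$ term. To absorb it, I would rather apply the formula to $\log|f|$, or compute $\tfrac12\Delta$ of an appropriate function, in the manner of \cite{Romeo}. Concretely: the $w$-term equals $-\tfrac{1}{2|f|}|\nabla f|^2$, which is a gradient term. Integrating $\Delta(\log|f|)$ or $\Delta(|f|^{1/2})$ instead should cancel it and yield
\[
\int_M \frac{\mathrm{Ric}(V,V)}{|f|}\,(\cdots) = \int_M (\text{nonnegative } |B|^2 \text{ term}) + \int_M (\text{nonnegative gradient term}).
\]
I would try dividing the pointwise identity by $|f|$ first and check that the cross terms become an exact divergence plus a positive square.

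\textbf{Step 3: conclusions.} If $\mathrm{Ric}(X,X) < 0$ on timelike $X$, the left side is negative while the right side is nonnegative, a contradiction; hence there is no timelike Killing field. If $\mathrm{Ric}(X,X) \le 0$, both nonnegative terms vanish. Then $B = 0$ and $\nabla f = 0$, so $w = 0$ and $A = 0$, i.e.\ $V$ is parallel.
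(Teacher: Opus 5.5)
The paper does not prove this statement; it only quotes it from Romero's work, so your argument has to stand on its own. Step 1 and your block computation are correct. Set $u=|f|=-g(V,V)$. Then $\mathrm{tr}(A^*A)=|B|^2-2|w|^2/u$ and $w=\nabla_VV=-\tfrac12\nabla f$. Also $\nabla u\perp V$ is spacelike. So the pointwise identity reads $\mathrm{Ric}(V,V)=|B|^2-\frac{g(\nabla u,\nabla u)}{2u}+\tfrac12\Delta u$.

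The gap is Step 2. This is where the proof actually lives, and you leave it as a guess; neither of your two guesses delivers what Step 3 uses. Multiplying the pointwise identity by $u^q$ and integrating by parts gives
\[
\int_M u^q\,\mathrm{Ric}(V,V)=\int_M u^q|B|^2-\frac{1+q}{2}\int_M u^{q-1}g(\nabla u,\nabla u).
\]
Integrating $\Delta(u^{1/2})$ corresponds to $q=-\tfrac12$. It leaves the gradient term with the wrong sign, so that option fails outright.

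The $\log u$ option is $q=-1$. There the gradient term cancels exactly, giving $\int_M \mathrm{Ric}(V,V)/u=\int_M |B|^2/u$. This does prove the nonexistence part. But no nonnegative gradient term is left, so under $\mathrm{Ric}\le 0$ you only get $B\equiv0$ and $\mathrm{Ric}(V,V)\equiv0$, not $\nabla f=0$ as Step 3 asserts. Equality in this identity genuinely does not force parallelism. Take $S^1\times N$ with $g=-\phi^2dt^2+g_N$, $\phi$ nonconstant, and $V=\partial_t$. Then $B=0$ and both sides vanish, yet $\nabla_VV=\phi\nabla\phi\neq 0$.

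You can close the gap in either of two ways. The first is to take $q<-1$, for instance by integrating $\Delta(u^{-1})$, which yields
\[
\int_M\frac{\mathrm{Ric}(V,V)}{u^2}=\int_M\frac{|B|^2}{u^2}+\frac12\int_M\frac{g(\nabla u,\nabla u)}{u^3}.
\]
Both terms on the right are nonnegative, so both conclusions follow at once: $B=0$ and $\nabla u=0$, hence $w=0$ and $A=0$.

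The second is to keep $q=-1$. Once you have $\mathrm{Ric}(V,V)\equiv 0$ and $B\equiv 0$, return to the pointwise identity, which now says $\Delta\log u=0$. Integrating $\mathrm{div}(\log u\,\nabla\log u)$ then gives $\int_M g(\nabla\log u,\nabla\log u)=0$, and $\nabla u=0$ follows because $\nabla u$ is spacelike.

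In both routes the work is done by the pointwise hypothesis $\mathrm{Ric}(V,V)\le 0$ together with the spacelikeness of $\nabla u$, not by the integral identity alone.
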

Now, we prove the following.
\begin{theorem}
\label{Boch Lor}
Let $(\mathfrak{g};\,[.\,;\,.]_{\mathfrak{g}};\,\alpha_{\mathfrak{g}},\langle\,.\,,\,.\,\rangle)$
be a Lorentzian Hom-Lie algebra.
\begin{itemize}
  \item[i)]  If $\mathfrak{g}$ has positive sectional curvature on timelike
             planes, then it admits no non-trivial timelike Killing vector.
  \item[ii)] If $\mathfrak{g}$ has non-positive sectional curvature on timelike
             planes, then every timelike Killing vector $v$ is parallel, i.e.,
             $\nabla_x v = 0$ for all $x \in \mathfrak{g}$.
\end{itemize}
\end{theorem}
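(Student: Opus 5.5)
The plan rests on the identity \eqref{import} and its sectional form \eqref{import2}, combined with two facts about Lorentzian signature. Let $v$ be a timelike Killing vector and let $x\in\mathfrak{g}$ be not proportional to $v$ (if $x=\lambda v$, Lemma~\ref{lem kil}(i) already gives $\nabla_x v=\lambda\nabla_v v=0$).

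\emph{Step 1 (sign of the denominator).} The plane spanned by $x$ and $v$ contains the timelike vector $v$, so it is a non-degenerate timelike plane. By the reverse Cauchy--Schwarz inequality,
\[
\|x\|^2\|v\|^2-\langle x,v\rangle^2<0 .
\]
Hence \eqref{import2} applies, and its denominator is strictly negative.

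\emph{Step 2 (sign of the numerator).} Setting $y=x$, $z=v$ in \eqref{compa}, or using the computation in the proof of Lemma~\ref{lem kil}(i), gives $\langle \nabla_x v,\alpha_{\mathfrak{g}}(v)\rangle=0$. Since $\alpha_{\mathfrak{g}}$ is an isometry by \eqref{isom}, the vector $\alpha_{\mathfrak{g}}(v)$ is timelike. Its orthogonal complement is therefore spacelike, so
\[
\langle\nabla_x v,\nabla_x v\rangle\ge 0,
\]
with equality if and only if $\nabla_x v=0$. Consequently \eqref{import2} forces $K(x,v)\le 0$, with $K(x,v)=0$ exactly when $\nabla_x v=0$.

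\emph{Part (i).} Assume $\dim\mathfrak{g}\ge 2$, so that some $x$ not proportional to $v$ exists. The hypothesis $K(x,v)>0$ then contradicts $K(x,v)\le 0$ from Step 2, so no timelike Killing vector exists. Note that a timelike vector is automatically non-trivial.

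\emph{Part (ii), the main obstacle.} Here I would argue as in Theorem~\ref{Boch Hom}(ii). Fix $x$ not proportional to $v$ and rewrite \eqref{import2} as
\[
\langle\nabla_x v,\nabla_x v\rangle = K(x,v)\bigl(\|x\|^2\|v\|^2-\langle x,v\rangle^2\bigr).
\]
To conclude $\langle\nabla_x v,\nabla_x v\rangle=0$, the sign information on $K$ must oppose that of the numerator. Since the numerator is $\ge 0$ and the denominator is $<0$, this requires $K(x,v)\ge 0$. The hypothesis $K\le 0$ alone only reproduces Step 2 and gives no conclusion.

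What I would try first is to extract more from the hypothesis on \emph{all} timelike planes, not only those through $v$. I would apply \eqref{import} to the timelike vectors $v+tx$ and to $\alpha_{\mathfrak{g}}^{\pm1}(v)$, which are Killing by Lemma~\ref{lem kil}(ii), and then compare the resulting curvature terms through the symmetry $K(x,y)=K(y,x)$.

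The step I expect to fail, or to need correction, is exactly this sign bookkeeping. If no extra inequality emerges, the proof of (ii) closes only when the sign of $K$ matches the opposite of the denominator. In that case Step 2 gives $\nabla_x v=0$ for every $x$, and by linearity $v$ is parallel. I would check the paper's curvature sign convention carefully at this point: as stated, with $K\le 0$, the hypothesis is automatically satisfied on every plane through $v$, so Steps 1--2 alone cannot yield parallelism.
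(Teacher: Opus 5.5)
Your part (i) is correct and is the paper's argument: $\langle\nabla_x v,\alpha_{\mathfrak g}(v)\rangle=0$ makes $\nabla_x v$ spacelike. The denominator of \eqref{import2} is negative on the timelike plane $\mathrm{span}(x,v)$, so $K(x,v)\le 0$, which contradicts $K>0$. Two small remarks. First, the substitution in \eqref{compa} that gives $\langle\nabla_x v,\alpha_{\mathfrak g}(v)\rangle=0$ is $y=z=v$, not $y=x$. Second, your caveat $\dim\mathfrak g\ge 2$ is genuinely needed, and the paper only uses it tacitly: in dimension one the hypothesis is vacuous and every nonzero vector is timelike and Killing.

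For (ii) your diagnosis is right, and it can be made definitive: (ii) as stated is false, so no extra bookkeeping over other timelike planes can save it. Your proposed rescue would fail anyway, because \eqref{import} holds only for Killing vectors and $v+tx$ is Killing only when $x$ is.

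Here is a counterexample. Take $\mathfrak{sl}(2,\mathbb R)$ with $\alpha_{\mathfrak g}=\mathrm{Id}$ and the ad-invariant form $\langle x,y\rangle=\mathrm{tr}(xy)$, which has signature $(2,1)$. The Koszul formula gives $\nabla_x y=\frac12[x,y]$, so every vector is Killing, and $\langle R(x,y)y,x\rangle=\frac14\langle[x,y],[x,y]\rangle$. If $P=\mathrm{span}(x,y)$ is timelike, then $[x,y]$ is a nonzero vector of the spacelike line $P^{\perp}$. It lies in $P^\perp$ by ad-invariance, and it is nonzero because $\mathfrak{sl}(2,\mathbb R)$ has no two-dimensional abelian subalgebra. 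Hence $K(P)<0$, so the hypothesis of (ii) holds, even strictly. Yet $v=e-f$ has $\langle v,v\rangle=-2$ and $\nabla_h v=\frac12[h,e-f]=e+f\neq 0$. This is the anti-de Sitter metric, of constant negative curvature.

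The paper's own proof of (ii) writes ``$K(x,v)\ge 0$'', which contradicts its stated hypothesis. What it actually proves is (ii) with \emph{non-negative} sectional curvature on timelike planes. With that corrected hypothesis your Step 2 finishes the proof at once:
\begin{itemize}
\item $0\le K(x,v)\le 0$ gives $\langle\nabla_x v,\nabla_x v\rangle=0$;
\item hence $\nabla_x v=0$, since $\alpha_{\mathfrak g}(v)^{\perp}$ is spacelike;
\item the case $x=\lambda v$ is Lemma~\ref{lem kil}(i).
\end{itemize}
This corrected sign is also the one consistent with Theorem~\ref{Boch inv Lor}(ii). There $\mathrm{Ric}(v,v)=\sum_{i\ge 2}\langle R(e_i,v)v,e_i\rangle=\|v\|^2\sum_{i\ge 2}K(e_i,v)$ with $\|v\|^2<0$. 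So non-positive Ricci curvature on $v$ corresponds to non-negative sectional curvature on timelike planes through $v$.
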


\begin{proof}
i) Suppose there exists a non-trivial timelike Killing vector $v$. From~\eqref{compa},
$\langle \nabla_x v,\,\alpha_{\mathfrak{g}}(v) \rangle = 0$ for all $x \in \mathfrak{g}$.
Since $v$ is timelike, $\alpha_{\mathfrak{g}}(v)$ is also timelike by~\eqref{isom}.
As the timelike vector $\alpha_{\mathfrak{g}}(v)$ is orthogonal to $\nabla_x v$,
we conclude that $\nabla_x v$ is spacelike for all $x \in \mathfrak{g}$, i.e.,
$\langle \nabla_x v,\,\nabla_x v \rangle \geq 0$. Now choose $x$ not proportional
to $v$. By hypothesis $K(x,v) > 0$, and since $x$ and $v$ span a timelike plane,
$\|x\|^2\|v\|^2 - \langle x,v \rangle^2 < 0$. From~\eqref{import2} we get
$\langle \nabla_x v,\,\nabla_x v \rangle < 0$, a contradiction.

ii) Suppose there exists a non-trivial timelike Killing vector $v$. The argument
in (i) shows that $\langle \nabla_x v,\,\nabla_x v \rangle \geq 0$ for all
$x \in \mathfrak{g}$. If $x = \lambda v$, then $\nabla_x v = \lambda \nabla_v v = 0$
by Lemma~\ref{lem kil}. If $x$ is not proportional to $v$, then $x$ and $v$ span
a timelike plane, so $\|x\|^2\|v\|^2 - \langle x,v \rangle^2 < 0$ and
$K(x,v) \geq 0$. From~\eqref{import2}, $\langle \nabla_x v,\,\nabla_x v \rangle \leq 0$.
Combined with the reverse inequality, we get $\langle \nabla_x v,\,\nabla_x v \rangle = 0$,
and since $\nabla_x v$ is spacelike, $\nabla_x v = 0$ for all $x \in \mathfrak{g}$.
\end{proof}

For involutive Lorentzian Hom-Lie algebras, we obtain the following Ricci-curvature
version.

\begin{theorem}
\label{Boch inv Lor}
Let $(\mathfrak{g};\,[.\,;\,.]_{\mathfrak{g}};\,\alpha_{\mathfrak{g}},\langle\,.\,,\,.\,\rangle)$
be an involutive Lorentzian Hom-Lie algebra.
\begin{itemize}
  \item[i)]  If $\mathfrak{g}$ has negative Ricci curvature on timelike vectors
             $(\mathrm{Ric}(x,x) < 0)$, then it admits no non-trivial timelike
             Killing vector.
  \item[ii)] If $\mathfrak{g}$ has non-positive Ricci curvature on timelike
             vectors, then every timelike Killing vector $v$ is parallel, i.e.,
             $\nabla_x v = 0$ for all $x \in \mathfrak{g}$.
\end{itemize}
\end{theorem}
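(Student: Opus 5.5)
Since $\alpha_{\mathfrak{g}}^2=\mathrm{Id}_{\mathfrak{g}}$, identity~\eqref{import} reduces, as in the proof of Theorem~\ref{Boch inv}, to
\[
\langle R(x,v)v,\,x\rangle=\langle \nabla_x v,\,\nabla_x v\rangle,\qquad \forall\, x\in\mathfrak{g}.
\]
The plan is to trace this identity over a Lorentzian orthonormal basis. The positivity then comes from the fact, already used in Theorem~\ref{Boch Lor}, that every $\nabla_x v$ is spacelike.

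Let $v$ be a non-trivial timelike Killing vector. Choose the basis adapted to $v$: set $e_0=v/\sqrt{-\langle v,v\rangle}$ and complete it by $e_1,\dots,e_{n-1}$ spanning $v^{\perp}$, which is positive definite because $v$ is timelike. Put $\varepsilon_i=\langle e_i,e_i\rangle$. Computing the Ricci curvature in the same way as in the Riemannian case, but with the signs $\varepsilon_i$, gives
\[
\mathrm{Ric}(v,v)=\sum_{i=0}^{n-1}\varepsilon_i\langle R(e_i,v)v,\,e_i\rangle=\sum_{i=0}^{n-1}\varepsilon_i\langle\nabla_{e_i}v,\,\nabla_{e_i}v\rangle .
\]
The $i=0$ term vanishes, because $\nabla_{e_0}v$ is a multiple of $\nabla_v v=0$ by Lemma~\ref{lem kil}(i). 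This is exactly why we align $e_0$ with $v$: it removes the only negative sign. Hence
\[
\mathrm{Ric}(v,v)=\sum_{i=1}^{n-1}\langle\nabla_{e_i}v,\,\nabla_{e_i}v\rangle .
\]

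Next, as in the proof of Theorem~\ref{Boch Lor}(i), relation~\eqref{compa} gives $\langle\nabla_x v,\alpha_{\mathfrak{g}}(v)\rangle=0$. Moreover $\alpha_{\mathfrak{g}}(v)$ is timelike by~\eqref{isom}. So $\nabla_x v$ lies in $\alpha_{\mathfrak{g}}(v)^{\perp}$, on which the metric is positive definite. Therefore each term $\langle\nabla_{e_i}v,\nabla_{e_i}v\rangle$ is $\ge 0$, with equality only if $\nabla_{e_i}v=0$, and so $\mathrm{Ric}(v,v)\ge 0$.

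For (i), the hypothesis applied to the timelike vector $v$ gives $\mathrm{Ric}(v,v)<0$, which contradicts $\mathrm{Ric}(v,v)\ge 0$. For (ii), we get $0\le\mathrm{Ric}(v,v)\le 0$, so every term vanishes and $\nabla_{e_i}v=0$ for $i\ge1$. Together with $\nabla_{e_0}v=0$ and linearity in $x$, this yields $\nabla_x v=0$ for all $x$.

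The main obstacle is the trace convention. The Ricci tensor in the paper is written as $\mathrm{tr}(z\mapsto R(x,y)z)$, but the Riemannian proof actually uses $\mathrm{Ric}(v,v)=\sum_i\langle R(e_i,v)v,e_i\rangle$. In the Lorentzian case we must insert the signs $\varepsilon_i$. The choice $e_0\parallel v$ is what prevents the negative timelike term from spoiling the sign argument, so this step needs to be stated carefully.
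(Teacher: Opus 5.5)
Your proposal is correct and follows essentially the same route as the paper. You reduce \eqref{import} using $\alpha_{\mathfrak{g}}^2=\mathrm{Id}_{\mathfrak{g}}$ and trace over an orthonormal basis whose timelike vector is $v/\sqrt{-\langle v,v\rangle}$. You kill that term with Lemma~\ref{lem kil}(i), and you get non-negativity of the remaining terms from $\nabla_x v\perp\alpha_{\mathfrak{g}}(v)$ with $\alpha_{\mathfrak{g}}(v)$ timelike. Your explicit tracking of the signs $\varepsilon_i$ and of the trace convention for $\mathrm{Ric}$ makes precise a point the paper's proof leaves implicit.
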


\begin{proof}
Since $\mathfrak{g}$ is involutive, $\alpha_{\mathfrak{g}}^2(x) = x$, and~\eqref{import}
becomes
\[
  \langle R(x,v)v,\,x \rangle = \langle \nabla_x v,\,\nabla_x v \rangle,
  \quad \forall\, x \in \mathfrak{g}.
\]
Taking an orthonormal basis $(e_i)_{1 \leq i \leq n}$ with $e_1 = v/|v|$, so
that $e_i$ is spacelike for $2 \leq i \leq n$, we get
\[
  \mathrm{Ric}(v,v) = \sum_{i=2}^n \langle \nabla_{e_i} v,\,\nabla_{e_i} v \rangle.
\]
Since $\nabla_{e_i} v$ is spacelike, $\langle \nabla_{e_i} v,\,\nabla_{e_i} v \rangle \geq 0$
for $2 \leq i \leq n$.

If (i) holds and there exists a non-trivial timelike Killing vector, then
$\mathrm{Ric}(v,v) < 0$ whereas
$\sum_{i=2}^n \langle \nabla_{e_i} v,\,\nabla_{e_i} v \rangle \geq 0$,
a contradiction.

If (ii) holds and there exists a timelike Killing vector, then
$\mathrm{Ric}(v,v) \leq 0$, so
$\sum_{i=2}^n \langle \nabla_{e_i} v,\,\nabla_{e_i} v \rangle \leq 0$.
Together with the non-negativity of each term, this forces
$\langle \nabla_{e_i} v,\,\nabla_{e_i} v \rangle = 0$, and since
$\nabla_{e_i} v$ is spacelike, $\nabla_{e_i} v = 0$ for each $i \geq 2$.
Furthermore, $e_1 = v/|v|$, so $\nabla_{e_1} v = (1/|v|)\nabla_v v = 0$
by Lemma~\ref{lem kil}. By linearity, $\nabla_x v = 0$ for all
$x \in \mathfrak{g}$.
\end{proof}

\section{Structure of the Hom-Lie Subalgebra of Killing Vectors}

To establish that Killing vectors form a Hom-Lie subalgebra, we first prove
the following identity for the Lie derivative of the pseudo-Riemannian metric.
Following \cite[relation 2.2]{Peyg1}, the Lie derivative of the metric is given by:
$L_w\langle, \rangle(u,v) = -\langle [w,u],  \alpha_g(v)\rangle - \langle \alpha_g(u), [w,v] \rangle.$

Then, it easy to see that $w$ is a Killing vector if and only if $L_w\langle, \rangle = 0.$

\begin{proposition}

Let $(\mathfrak{g};\,[.\,;\,.]_{\mathfrak{g}};\,\alpha_{\mathfrak{g}},\langle\,.\,,\,.\,\rangle)$
be a pseudo-Riemannian Hom-Lie algebra and let $v, w \in \mathfrak{g}$. Then
\begin{equation}
\label{Lie deriv metric}
  L_w\bigl(L_{\alpha_{\mathfrak{g}}(v)}\langle\,.\,,\,.\,\rangle\bigr)
  - L_v\bigl(L_{\alpha_{\mathfrak{g}}(w)}\langle\,.\,,\,.\,\rangle\bigr)
  = L_{\alpha_{\mathfrak{g}}^{-1}([w,v])}\langle\,.\,,\,.\,\rangle.
\end{equation}
\end{proposition}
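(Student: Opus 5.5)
The plan is a direct computation: evaluate both sides of \eqref{Lie deriv metric} on an arbitrary pair $(x,y)\in\mathfrak{g}\times\mathfrak{g}$ and compare. First I must fix what $L_w$ means on a general symmetric bilinear form $B$, since the paper only gives it on $\langle\cdot,\cdot\rangle$. I take the same formula as in \cite[relation 2.2]{Peyg1}:
\[
(L_wB)(u,v)=-B([w,u],\alpha_{\mathfrak{g}}(v))-B(\alpha_{\mathfrak{g}}(u),[w,v]).
\]
This is applied with $B=L_{\alpha_{\mathfrak{g}}(v)}\langle\cdot,\cdot\rangle$. Expanding the outer and then the inner derivative, the two minus signs cancel and $L_w\bigl(L_{\alpha_{\mathfrak{g}}(v)}\langle\cdot,\cdot\rangle\bigr)(x,y)$ becomes a sum of four terms:
\[
\langle [\alpha_{\mathfrak{g}}(v),[w,x]],\alpha_{\mathfrak{g}}^2(y)\rangle
+\langle \alpha_{\mathfrak{g}}([w,x]),[\alpha_{\mathfrak{g}}(v),\alpha_{\mathfrak{g}}(y)]\rangle
+\langle [\alpha_{\mathfrak{g}}(v),\alpha_{\mathfrak{g}}(x)],\alpha_{\mathfrak{g}}([w,y])\rangle
+\langle \alpha_{\mathfrak{g}}^2(x),[\alpha_{\mathfrak{g}}(v),[w,y]]\rangle .
\]
The same expansion with $v$ and $w$ interchanged gives the second term on the left.

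Next I would dispose of the two ``mixed'' terms in each expansion. By \eqref{bracket 1} we have $[\alpha_{\mathfrak{g}}(v),\alpha_{\mathfrak{g}}(y)]=\alpha_{\mathfrak{g}}([v,y])$, and by \eqref{isom} the mixed terms therefore reduce to $\langle [w,x],[v,y]\rangle+\langle [v,x],[w,y]\rangle$. This expression is symmetric in $(v,w)$, so it cancels exactly in the difference.

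What remains is
\[
\langle [\alpha_{\mathfrak{g}}(v),[w,x]]-[\alpha_{\mathfrak{g}}(w),[v,x]],\alpha_{\mathfrak{g}}^2(y)\rangle
\]
together with the analogous term having $x$ and $y$ interchanged in the second slot. The Hom-Jacobi identity applied to the triple $(v,w,x)$ gives
\[
[\alpha_{\mathfrak{g}}(v),[w,x]]-[\alpha_{\mathfrak{g}}(w),[v,x]]=[\alpha_{\mathfrak{g}}(x),[w,v]],
\]
so the first remaining piece is $\langle[\alpha_{\mathfrak{g}}(x),[w,v]],\alpha_{\mathfrak{g}}^2(y)\rangle$. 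To match this with the right-hand side, I would write $[w,v]=\alpha_{\mathfrak{g}}(\alpha_{\mathfrak{g}}^{-1}([w,v]))$ and use \eqref{bracket 1} to get $[\alpha_{\mathfrak{g}}(x),[w,v]]=\alpha_{\mathfrak{g}}([x,\alpha_{\mathfrak{g}}^{-1}([w,v])])$. Then \eqref{isom} strips one $\alpha_{\mathfrak{g}}$, and skew-symmetry turns the piece into
\[
-\langle[\alpha_{\mathfrak{g}}^{-1}([w,v]),x],\alpha_{\mathfrak{g}}(y)\rangle .
\]
This is precisely the first summand of $L_{\alpha_{\mathfrak{g}}^{-1}([w,v])}\langle\cdot,\cdot\rangle(x,y)$. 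The second summand follows identically from the $y$-part, using symmetry of the metric.

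The main obstacle is not computational but definitional. The statement only makes sense once $L_w$ is extended to arbitrary bilinear forms, and the choice must be the one that makes the mixed terms symmetric in $(v,w)$; I would state the extension explicitly before starting. Beyond that, the only delicate point is tracking signs and powers of $\alpha_{\mathfrak{g}}$ so that the Hom-Jacobi identity is applied with the correct twisted argument, namely $\alpha_{\mathfrak{g}}(x)$ rather than $x$.
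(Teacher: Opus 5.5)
Your proposal is correct and follows the paper's proof. Both arguments expand each iterated Lie derivative into four terms and cancel the mixed terms, which are symmetric in $(v,w)$ once multiplicativity and the isometry property of $\alpha_{\mathfrak{g}}$ are used. Both then apply the Hom-Jacobi identity to the remaining terms and strip one $\alpha_{\mathfrak{g}}$ to recognize $L_{\alpha_{\mathfrak{g}}^{-1}([w,v])}\langle\cdot,\cdot\rangle(x,y)$. You are also right to make explicit the extension $(L_wB)(u,v)=-B([w,u],\alpha_{\mathfrak{g}}(v))-B(\alpha_{\mathfrak{g}}(u),[w,v])$, which the paper uses only implicitly.
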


\begin{proof}
Take $x, y \in \mathfrak{g}$. We compute:
\begin{align*}
\bigl(L_w(L_{\alpha_{\mathfrak{g}}(v)}\langle\,.\,,\,.\,\rangle)\bigr)(x,y)
&= -\bigl(L_{\alpha_{\mathfrak{g}}(v)}\langle\,.\,,\,.\,\rangle\bigr)([w,x],\, \alpha_{\mathfrak{g}}(y))
   -\bigl(L_{\alpha_{\mathfrak{g}}(v)}\langle\,.\,,\,.\,\rangle\bigr)(\alpha_{\mathfrak{g}}(x),\, [w,y]) \\
&= \left\langle [\alpha_{\mathfrak{g}}(v),[w,x]],\, \alpha_{\mathfrak{g}}^2(y) \right\rangle
 + \left\langle \alpha_{\mathfrak{g}}([w,x]),\, [\alpha_{\mathfrak{g}}(v),\alpha_{\mathfrak{g}}(y)] \right\rangle \\
&\quad
 + \left\langle [\alpha_{\mathfrak{g}}(v),\alpha_{\mathfrak{g}}(x)],\, \alpha_{\mathfrak{g}}([w,y]) \right\rangle
 + \left\langle \alpha_{\mathfrak{g}}^2(x),\, [\alpha_{\mathfrak{g}}(v),[w,y]] \right\rangle \\
&= \left\langle [\alpha_{\mathfrak{g}}(v),[w,x]],\, \alpha_{\mathfrak{g}}^2(y) \right\rangle
 + \left\langle [w,x],\, [v,y] \right\rangle \\
&\quad
 + \left\langle [v,x],\, [w,y] \right\rangle
 + \left\langle \alpha_{\mathfrak{g}}^2(x),\, [\alpha_{\mathfrak{g}}(v),[w,y]] \right\rangle,
\end{align*}
where we used in the last step the fact that $\alpha_{\mathfrak{g}}$ preserves
both the bracket and the scalar product. Similarly,
\begin{align*}
\bigl(L_v(L_{\alpha_{\mathfrak{g}}(w)}\langle\,.\,,\,.\,\rangle)\bigr)(x,y)
&= -\bigl(L_{\alpha_{\mathfrak{g}}(w)}\langle\,.\,,\,.\,\rangle\bigr)([v,x],\, \alpha_{\mathfrak{g}}(y))
   -\bigl(L_{\alpha_{\mathfrak{g}}(w)}\langle\,.\,,\,.\,\rangle\bigr)(\alpha_{\mathfrak{g}}(x),\, [v,y]) \\
&= \left\langle [\alpha_{\mathfrak{g}}(w),[v,x]],\, \alpha_{\mathfrak{g}}^2(y) \right\rangle
 + \left\langle \alpha_{\mathfrak{g}}([v,x]),\, [\alpha_{\mathfrak{g}}(w),\alpha_{\mathfrak{g}}(y)] \right\rangle \\
&\quad
 + \left\langle [\alpha_{\mathfrak{g}}(w),\alpha_{\mathfrak{g}}(x)],\, \alpha_{\mathfrak{g}}([v,y]) \right\rangle
 + \left\langle \alpha_{\mathfrak{g}}^2(x),\, [\alpha_{\mathfrak{g}}(w),[v,y]] \right\rangle \\
&= \left\langle [\alpha_{\mathfrak{g}}(w),[v,x]],\, \alpha_{\mathfrak{g}}^2(y) \right\rangle
 + \left\langle [v,x],\, [w,y] \right\rangle \\
&\quad
 + \left\langle [w,x],\, [v,y] \right\rangle
 + \left\langle \alpha_{\mathfrak{g}}^2(x),\, [\alpha_{\mathfrak{g}}(w),[v,y]] \right\rangle.
\end{align*}
Setting $A = \bigl(L_w(L_{\alpha_{\mathfrak{g}}(v)}\langle\,.\,,\,.\,\rangle)\bigr)(x,y)$
and $B = \bigl(L_v(L_{\alpha_{\mathfrak{g}}(w)}\langle\,.\,,\,.\,\rangle)\bigr)(x,y)$,
we obtain:
\begin{align*}
A - B
&= \left\langle [\alpha_{\mathfrak{g}}(v),[w,x]],\, \alpha_{\mathfrak{g}}^2(y) \right\rangle
 + \left\langle [\alpha_{\mathfrak{g}}(w),[x,v]],\, \alpha_{\mathfrak{g}}^2(y) \right\rangle \\
&\quad
 + \left\langle \alpha_{\mathfrak{g}}^2(x),\, [\alpha_{\mathfrak{g}}(v),[w,y]] \right\rangle
 + \left\langle \alpha_{\mathfrak{g}}^2(x),\, [\alpha_{\mathfrak{g}}(w),[y,v]] \right\rangle \\
&= -\left\langle [\alpha_{\mathfrak{g}}(x),[v,w]],\, \alpha_{\mathfrak{g}}^2(y) \right\rangle
 - \left\langle \alpha_{\mathfrak{g}}^2(x),\, [\alpha_{\mathfrak{g}}(y),[v,w]] \right\rangle \\
&= -\left\langle [[w,v],\alpha_{\mathfrak{g}}(x)],\, \alpha_{\mathfrak{g}}^2(y) \right\rangle
 - \left\langle \alpha_{\mathfrak{g}}^2(x),\, [[w,v],\alpha_{\mathfrak{g}}(y)] \right\rangle \\
&= -\left\langle [\alpha_{\mathfrak{g}}^{-1}([w,v]),x],\, \alpha_{\mathfrak{g}}(y) \right\rangle
 - \left\langle \alpha_{\mathfrak{g}}(x),\, [\alpha_{\mathfrak{g}}^{-1}([w,v]),y] \right\rangle,
\end{align*}
where we again used the fact that $\alpha_{\mathfrak{g}}$ preserves both the
bracket and the scalar product. Hence
\[
  A - B = \bigl(L_{\alpha_{\mathfrak{g}}^{-1}([w,v])}\langle\,.\,,\,.\,\rangle\bigr)(x,y),
\]
which gives~\eqref{Lie deriv metric}.
\end{proof}

We can now state and prove the main theorem of this section.

\begin{theorem}
\label{theorem Kill}
Let $(\mathfrak{g};\,[.\,;\,.]_{\mathfrak{g}};\,\alpha_{\mathfrak{g}},\langle\,.\,,\,.\,\rangle)$
be a pseudo-Riemannian Hom-Lie algebra. The space $\mathfrak{K}$ of Killing
vectors forms a Hom-Lie subalgebra of $\mathfrak{g}$.
\end{theorem}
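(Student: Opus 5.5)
We must show three things: $\mathfrak{K}$ is a linear subspace, $\alpha_{\mathfrak{g}}(\mathfrak{K})\subseteq\mathfrak{K}$, and $[\mathfrak{K},\mathfrak{K}]\subseteq\mathfrak{K}$.

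\emph{Subspace.} The Killing condition \eqref{kill} is linear in $v$, because $\nabla_x v$ is linear in $v$. Hence $\mathfrak{K}$ is a linear subspace.

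\emph{Stability under $\alpha_{\mathfrak{g}}$.} This is exactly Lemma~\ref{lem kil}(ii). The lemma gives $\alpha_{\mathfrak{g}}(\mathfrak{K})\subseteq\mathfrak{K}$ and also $\alpha_{\mathfrak{g}}^{-1}(\mathfrak{K})\subseteq\mathfrak{K}$, so $\alpha_{\mathfrak{g}}$ restricts to a bijection of $\mathfrak{K}$.

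\emph{Closure under the bracket.} The plan is to use the characterization, stated just before the Proposition, that $w$ is Killing if and only if $L_w\langle\,.\,,\,.\,\rangle=0$, together with identity \eqref{Lie deriv metric}. Let $v,w\in\mathfrak{K}$. The argument runs as follows.
\begin{itemize}
\item By Lemma~\ref{lem kil}(ii), $\alpha_{\mathfrak{g}}(v)$ and $\alpha_{\mathfrak{g}}(w)$ are Killing.
\item Hence $L_{\alpha_{\mathfrak{g}}(v)}\langle\,.\,,\,.\,\rangle=0$ and $L_{\alpha_{\mathfrak{g}}(w)}\langle\,.\,,\,.\,\rangle=0$.
\item The left-hand side of \eqref{Lie deriv metric} is then $L_w$ and $L_v$ applied to the zero bilinear form. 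This vanishes, since $L_w$ acts linearly on bilinear forms by the formula $(L_wB)(u,u')=-B([w,u],\alpha_{\mathfrak{g}}(u'))-B(\alpha_{\mathfrak{g}}(u),[w,u'])$ used in the proof of the Proposition.
\item Therefore $L_{\alpha_{\mathfrak{g}}^{-1}([w,v])}\langle\,.\,,\,.\,\rangle=0$, so $\alpha_{\mathfrak{g}}^{-1}([w,v])\in\mathfrak{K}$.
\item Applying Lemma~\ref{lem kil}(ii) once more, $[w,v]=\alpha_{\mathfrak{g}}\bigl(\alpha_{\mathfrak{g}}^{-1}([w,v])\bigr)\in\mathfrak{K}$.
\item By skew-symmetry, $[v,w]\in\mathfrak{K}$ as well.
\end{itemize}

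\emph{Main obstacle.} The step needing care is the equivalence ``$w$ Killing $\iff L_w\langle\,.\,,\,.\,\rangle=0$'', which the paper only asserts as easy. If a justification is required, I would argue as follows. Substitute the torsion-free relation \eqref{torsion-free}, $[w,u]=\nabla_w u-\nabla_u w$, into the formula for $L_w\langle\,.\,,\,.\,\rangle(u,u')$. The terms involving $\nabla_w$ are $\langle\nabla_w u,\alpha_{\mathfrak{g}}(u')\rangle+\langle\alpha_{\mathfrak{g}}(u),\nabla_w u'\rangle$, and these cancel by the compatibility condition \eqref{compa}. What remains is $\langle\nabla_u w,\alpha_{\mathfrak{g}}(u')\rangle+\langle\nabla_{u'}w,\alpha_{\mathfrak{g}}(u)\rangle$, whose vanishing for all $u,u'$ is precisely the Killing condition \eqref{kill}. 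The remaining steps are a direct combination of the earlier results.
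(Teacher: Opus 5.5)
Your proposal is correct and follows the paper's proof essentially step for step. You use linearity of the Killing condition, then Lemma~\ref{lem kil}(ii) for $\alpha_{\mathfrak{g}}$-stability, then identity \eqref{Lie deriv metric} applied to the Killing vectors $\alpha_{\mathfrak{g}}(v),\alpha_{\mathfrak{g}}(w)$, and finally one more use of Lemma~\ref{lem kil}(ii) to pass from $\alpha_{\mathfrak{g}}^{-1}([w,v])$ to $[w,v]$. Your added verification that $w$ is Killing if and only if $L_w\langle\,.\,,\,.\,\rangle=0$ is also correct: substituting \eqref{torsion-free} and cancelling the $\nabla_w$ terms via \eqref{compa} works, and it supplies a step the paper only asserts.
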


\begin{proof}
It is clear that any linear combination of Killing vectors is again a Killing
vector. By Lemma~\ref{lem kil}(ii), $\alpha_{\mathfrak{g}}(\mathfrak{K}) \subset \mathfrak{K}$.
It remains to show that $\mathfrak{K}$ is closed under the bracket. Let $v, w \in \mathfrak{K}$.
By Lemma~\ref{lem kil}(ii), $\alpha_{\mathfrak{g}}(v)$ and $\alpha_{\mathfrak{g}}(w)$
are also Killing vectors, so $L_{\alpha_{\mathfrak{g}}(v)}\langle\,.\,,\,.\,\rangle = 0$
and $L_{\alpha_{\mathfrak{g}}(w)}\langle\,.\,,\,.\,\rangle = 0$. From~\eqref{Lie deriv metric}
it follows that $L_{\alpha_{\mathfrak{g}}^{-1}([w,v])}\langle\,.\,,\,.\,\rangle = 0$,
i.e., $\alpha_{\mathfrak{g}}^{-1}([w,v])$ is a Killing vector. Applying
Lemma~\ref{lem kil}(ii) once more, $[w,v]$ is a Killing vector.
\end{proof}

We recall the following definition \cite[Definition~2.1]{Peyg4}.

\begin{definition}
Let $(\mathfrak{g};\,[.\,;\,.]_{\mathfrak{g}};\,\alpha_{\mathfrak{g}})$ be a
Hom-Lie algebra. A Hom-Lie subalgebra $\mathfrak{h}$ of $\mathfrak{g}$ is
called \emph{totally geodesic} if $\nabla_x y \in \mathfrak{h}$ for all
$x, y \in \mathfrak{h}$.
\end{definition}

We show that the Hom-Lie subalgebra $\mathfrak{K}$ of Killing vectors is
totally geodesic.

\begin{proposition}
Let $(\mathfrak{g};\,[.\,;\,.]_{\mathfrak{g}};\,\alpha_{\mathfrak{g}},\langle\,.\,,\,.\,\rangle)$
be a pseudo-Riemannian Hom-Lie algebra. The space $\mathfrak{K}$ of Killing
vectors is a totally geodesic Hom-Lie subalgebra of $\mathfrak{g}$.
\end{proposition}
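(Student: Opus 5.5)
The plan is to prove the stronger identity
\[
\nabla_v w = \tfrac12 [v,w]_{\mathfrak{g}}, \qquad \forall\, v,w \in \mathfrak{K},
\]
and then conclude from Theorem~\ref{theorem Kill}. By that theorem, $\mathfrak{K}$ is already a Hom-Lie subalgebra, so $[v,w]_{\mathfrak{g}} \in \mathfrak{K}$. Since $\mathfrak{K}$ is a linear subspace, the identity then gives $\nabla_v w \in \mathfrak{K}$, which is exactly the totally geodesic condition.

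To obtain the identity, I would start from the Koszul formula~\eqref{koszul} with $x=v$, $y=w$ and arbitrary $z$:
\[
2\langle \nabla_v w, \alpha_{\mathfrak{g}}(z)\rangle = \langle [v,w]_{\mathfrak{g}}, \alpha_{\mathfrak{g}}(z)\rangle + \langle [z,w]_{\mathfrak{g}}, \alpha_{\mathfrak{g}}(v)\rangle + \langle [z,v]_{\mathfrak{g}}, \alpha_{\mathfrak{g}}(w)\rangle .
\]
I would then rewrite the last two terms using the characterization of Killing vectors through the Lie derivative recalled before the previous proposition: $w\in\mathfrak{K}$ if and only if $L_w\langle\,,\rangle=0$. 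Written out, this says
\[
\langle [w,u],\alpha_{\mathfrak{g}}(y)\rangle = -\langle \alpha_{\mathfrak{g}}(u),[w,y]\rangle \quad \text{for all } u,y .
\]

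For the second term, take $u=z$ and $y=v$ and use skew-symmetry of the bracket:
\[
\langle [z,w],\alpha_{\mathfrak{g}}(v)\rangle = -\langle [w,z],\alpha_{\mathfrak{g}}(v)\rangle = \langle \alpha_{\mathfrak{g}}(z),[w,v]\rangle .
\]
For the third term, apply the same argument with the roles of $v$ and $w$ exchanged, which is allowed since $v\in\mathfrak{K}$:
\[
\langle [z,v],\alpha_{\mathfrak{g}}(w)\rangle = \langle \alpha_{\mathfrak{g}}(z),[v,w]\rangle .
\]
These two contributions are negatives of each other and cancel. The formula therefore reduces to
\[
2\langle \nabla_v w,\alpha_{\mathfrak{g}}(z)\rangle = \langle [v,w]_{\mathfrak{g}},\alpha_{\mathfrak{g}}(z)\rangle \quad \text{for all } z.
\]
Because $\alpha_{\mathfrak{g}}$ is invertible, $\alpha_{\mathfrak{g}}(z)$ ranges over all of $\mathfrak{g}$, and nondegeneracy of the metric then yields $2\nabla_v w = [v,w]_{\mathfrak{g}}$.

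The only delicate point is the equivalence between the Killing condition~\eqref{kill} and $L_w\langle\,,\rangle=0$, which the paper states as ``easy to see''. I would rely on it as stated, since it was already used in the proof of Theorem~\ref{theorem Kill}. If a self-contained check were wanted, one could instead derive the needed identity directly from~\eqref{kill} together with~\eqref{torsion-free} and~\eqref{compa}. As a consistency check, the identity $\nabla_v w=\tfrac12[v,w]_{\mathfrak{g}}$ with $w=v$ gives $\nabla_v v=0$, in agreement with Lemma~\ref{lem kil}(i).
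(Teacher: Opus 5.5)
Your proof is correct and ends the same way as the paper's: you establish $[v,w]_{\mathfrak{g}} = 2\nabla_v w$ for $v,w\in\mathfrak{K}$ and then invoke Theorem~\ref{theorem Kill}. The difference is in how you obtain that identity.

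\textbf{The paper's derivation.} The paper uses only the $\nabla$-form of the Killing condition. Applying~\eqref{kill} to $w$, then~\eqref{compa}, then~\eqref{kill} to $v$ gives $\langle \nabla_v w,\alpha_{\mathfrak{g}}(x)\rangle = -\langle \nabla_w v,\alpha_{\mathfrak{g}}(x)\rangle$ for all $x$. Hence $\nabla_v w=-\nabla_w v$, and torsion-freeness~\eqref{torsion-free} finishes. This is essentially the self-contained check you mention at the end, and it is shorter.

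\textbf{Your derivation.} You go through the Koszul formula~\eqref{koszul} and the bracket form $L_v\langle\,,\rangle = L_w\langle\,,\rangle = 0$. So you need the equivalence the paper calls easy to see. That equivalence does hold. Expand $[v,x]_{\mathfrak{g}}$ and $[v,y]_{\mathfrak{g}}$ by~\eqref{torsion-free}; the two $\nabla_v$-terms then cancel by~\eqref{compa}. This leaves $L_v\langle\,,\rangle(x,y) = \langle \nabla_x v,\alpha_{\mathfrak{g}}(y)\rangle + \langle \nabla_y v,\alpha_{\mathfrak{g}}(x)\rangle$ for all $x,y$, which is exactly the left-hand side of the Killing equation.

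\textbf{Comparison.} Your cancellation is purely bracket-theoretic, in the same language as the proof of Theorem~\ref{theorem Kill}. It also exhibits $\nabla_v w=\tfrac12[v,w]_{\mathfrak{g}}$ directly from~\eqref{koszul}, as the Hom-analogue of the classical formula for bi-invariant metrics. The paper's version avoids both the Koszul formula and the Lie-derivative reformulation.
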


\begin{proof}
Let $v$ and $w$ be two Killing vectors. For any $x \in \mathfrak{g}$,
\begin{align*}
  \langle \nabla_v w,\,\alpha_{\mathfrak{g}}(x) \rangle
  &= -\langle \nabla_x w,\,\alpha_{\mathfrak{g}}(v) \rangle \\
  &= \phantom{-}\langle \alpha_{\mathfrak{g}}(w),\,\nabla_x v \rangle \\
  &= -\langle \alpha_{\mathfrak{g}}(x),\,\nabla_w v \rangle,
\end{align*}
where we used successively the Killing condition for $w$, identity~\eqref{compa},
and the Killing condition for $v$. Hence $\nabla_v w = -\nabla_w v$, which
gives $[v,w]_{\mathfrak{g}} = 2\nabla_v w$. Since $[v,w]_{\mathfrak{g}}$ is a
Killing vector by Theorem~\ref{theorem Kill}, so is $\nabla_v w$. Therefore
$\mathfrak{K}$ is a totally geodesic Hom-Lie subalgebra of $\mathfrak{g}$.
\end{proof}
\begin{example}
    Let us consider $\mathfrak{sl}(2)$ with basis ${e,f,h}$:
	$$
	[h,e]=2e,\quad [h,f]=-2f,\quad [e,f]=h.
	$$
	
	Define  $\alpha_{\mathfrak{g}}$ by:
	$$
	\alpha_{\mathfrak{g}}(e) = -e,\quad
	\alpha_{\mathfrak{g}}(f)= -f,\quad
	\alpha_{\mathfrak{g}}(h) = h,
	$$
	and the  metric $\langle \cdot , \cdot \rangle $ by :
	$$
	\langle e, f \rangle  = \langle h, h\rangle = 1 ,\quad \text{and $0$ otherwise}.
	$$
	Then \[
(\mathfrak{sl}(2), [\cdot,\cdot], \alpha_{\mathfrak{g}}, \langle \cdot , \cdot \rangle)
\] is a pseudo-Riemannian Hom-Lie algebra. Moreover, the space $\mathfrak{K}$ of Killing vectors is given by $\mathfrak{K} = span (h)$.
\end{example}

\bibliographystyle{amsplain}

\bigskip

\noindent\textbf{Raymond HOUNNONKPE}\\
Institut Ouest Africain de Mathématiques, Université Gamal Abdel Nasser de Conakry, Guinée.\\
E-mail: \texttt{adivignon.hounnonkpe@ioam.uganc.edu.gn}\\
E-mail: \texttt{rhounnonkpe@ymail.com}

\medskip

\noindent\textbf{Ibrahima BAKAYOKO}\\
Université de N'Zérékoré, Guinée.\\
E-mail: \texttt{ibrahimabakayoko27@gmail.com}

\medskip

\noindent\textbf{Emmanuel GNANDI}\\
INSA Toulouse, Département de Génie Mathématique, IMT,\\
135 Avenue de Rangueil, 31077 Toulouse Cedex 4, France.\\
E-mail: \texttt{kpanteemmanuel@gmail.com}\\
E-mail: \texttt{gnandi@insa-toulouse.fr}

\medskip

\noindent\textbf{Jo\"el TOSSA}\\
Institut Ouest Africain de Mathématiques, Université Gamal Abdel Nasser de Conakry, Guinée.\\
E-mail: \texttt{joeltossa@gmail.com}
\end{document}